\documentclass[11pt]{article}

\usepackage{amssymb}
\usepackage{mathrsfs}
\addtolength{\topmargin}{-.5in} \addtolength{\textheight}{1in}
\addtolength{\oddsidemargin}{-.5in}
\addtolength{\evensidemargin}{-.5in}
\addtolength{\textwidth}{1in}
\usepackage{latexsym,amsmath,amssymb,amsfonts,epsfig,graphicx,cite,psfrag}
\usepackage{eepic,color,colordvi,amscd}
\usepackage{ebezier}
\usepackage{verbatim}
\usepackage{subfigure}
\usepackage{accents}
\usepackage{amsthm}
\usepackage{comment}

\theoremstyle{plain}
\newtheorem{theo}{Theorem}[section]
\newtheorem{claim}[theo]{Claim}

\newtheorem{lem}[theo]{Lemma}

\theoremstyle{definition}

\theoremstyle{remark}

\setcounter{section}{0}

\def\qed{\hfill \rule{4pt}{7pt}}

\newcommand{\vertexset}[1]{V(#1)}
\newcommand{\edgeset}[1]{E(#1)}
\newcommand{\order}[1]{|#1|}

\newcommand{\bridges}[2][]{\beta_{#1}(#2)}
\newcommand{\taubridges}[2][]{\tau_{#1 #2}}

\begin{document}

\title{Large cycles in essentially 4-connected planar graphs}
\author{Michael C. Wigal\footnote{Supported by NSF Graduate
    Research Fellowship under Grant No. DGE-1650044}   \ \ and Xingxing Yu\footnote{Partially
    supported by NSF Grant DMS 1600738}\\
School of Mathematics\\
Georgia Institute of Technology\\
Atlanta, GA 30332}
\date{\today}

\maketitle

\begin{abstract}

Tutte proved that every 4-connected planar graph contains a Hamilton cycle, but
there are 3-connected $n$-vertex planar graphs whose longest cycles have length
$\Theta(n^{\log_32})$. On the other hand,  Jackson and Wormald in 1992 proved that an
essentially 4-connected $n$-vertex planar graph contains a cycle of
length at least $(2n+4)/5$, which was recently improved to $5(n+2)/8$ by Fabrici {\it et al}.  In this
paper, we improve this bound to $\lceil (2n+6)/3\rceil$ for $n\ge 6$,
which is best possible,  by proving a quantitative version of a result
of Thomassen on Tutte paths. 

 \bigskip

AMS Subject Classification: 05C38, 05C40, 05C45

Keywords: Cycle, Bridge, Tutte subgraph

\end{abstract}

\newpage

\section{Introduction}
The Four Color Theorem \cite{AH77, AHK77}  (also see \cite{RSST97})
states that every plane graph is 4-face-colorable. All known proofs of the Four Color Theorem
require the use of a computer. However, if a plane graph has a
Hamilton cycle then one can  properly  four color all its faces easily.

Tait \cite{Ta84} conjectured that every 3-connected cubic planar 
graph contains a Hamilton cycle, which, if true, would imply the Four
Color Theorem. However, Tutte \cite{Tu46} discovered a counterexample and, since then, families of counterexamples have been constructed, see
for instance \cite{HM88}.
On the other hand, Whitney \cite{Wh31} proved that every planar
triangulation without separating triangles are Hamiltonian, which was
extended by Tutte \cite{Tu56} to all 4-connected planar graphs. Later,
Thomassen \cite{Th83} showed that in fact all 4-connected
planar graphs are Hamilton connected, i.e., there is a Hamilton path between any two vertices.

There has been interest in finding good lower bounds on the circumference
of 3-connected planar graphs. (The {\it circumference} of a graph is the length of a longest cycle
in that graph.) For instance,  Chen and the second author \cite{CY02} showed that the
circumference of a 3-connected planar $n$-vertex graph is at least
$n^{\log_32}$, which is best possible because of iterated planar
triangulations $Tr(k)$: starting with $Tr(0)=K_3$, for each $k\ge 1$, add a 
vertex in each face of $Tr(k-1)$ and connect it with an edge to each vertex on
the boundary of that face.

For any positive integer $k$, a graph is {\it essentially $k$-connected} if it is connected and, for
any $S\subseteq V(G)$ with $|S|<k$, $G-S$ is connected or has exactly
two components one of which has exactly one vertex. Jackson and Wormald
\cite{JW92} proved that the circumference of any  essentially
4-connected $n$-vertex planar graph is
at least $(2n+4)/5$.
Very recently, this bound has been improved to $5(n+2)/8$ by Fabrici,
Harant, Mohr, and Schmidt  \cite{FHMS19}, using sophisticated
discharging rules. The main result of this paper is the
following

\begin{theo}\label{main}
Let $n\ge 6$ be an integer and let
$G$ be any essentially 4-connected  $n$-vertex planar graph. Then
the circumference of $G$ is at least $\lceil (2n+6)/3\rceil $.
\end{theo}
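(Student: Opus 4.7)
The plan is to establish Theorem~\ref{main} via a quantitative strengthening of Thomassen's Tutte-path theorem, combined with the rigid structural consequences of essential $4$-connectivity for bridges.

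The first observation is that essential $4$-connectivity tightly controls the bridges of any cycle $C \subseteq G$ with $|V(C)| \ge 5$. If $B$ is a non-trivial $C$-bridge with attachment set $A = V(B) \cap V(C)$, then $A$ is a vertex cut of $G$ separating the interior $V(B) \setminus A$ from $V(C) \setminus A$. When $|A| \le 3$, essential $4$-connectivity forces one side of this cut to be a single vertex, and since $|V(C) \setminus A| \ge 2$, that side must be the interior. So for a Tutte cycle $C$ of $G$, every non-trivial bridge has exactly one interior vertex, of degree $|A| \in \{2,3\}$ in $G$. Consequently $n - |V(C)| = b$, where $b$ is the number of non-trivial $C$-bridges, and the desired bound $|V(C)| \ge \lceil (2n+6)/3 \rceil$ is equivalent to $b \le (|V(C)| - 6)/2$: each non-trivial bridge must be charged at least two vertices of the cycle, plus a constant of six.

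The heart of the argument would be a quantitative refinement of Thomassen's theorem on Tutte paths: in a $2$-connected plane graph $H$ with outer cycle $D$, a distinguished edge $xy \in E(D)$, and a distinguished vertex $u \in V(D) \setminus \{x,y\}$, there exists a Tutte path $P$ from $x$ to $y$ through $u$ with an explicit lower bound on $|V(P)|$ in terms of $|V(H)|$ and the number of $P$-bridges arising from structural obstructions. The proof would follow Thomassen's inductive scheme --- split across separating pairs of $D$-vertices, peel off end-bridges, recurse, and glue the resulting Tutte paths --- augmented by careful tracking of how the lower bound on $|V(P)|$ propagates through each reduction. To deduce Theorem~\ref{main}, I would apply the quantitative theorem to $G$ with a judiciously chosen outer face and edge; closing the resulting Tutte path by an arc of the outer cycle yields a Tutte cycle $C$ whose length satisfies $|V(C)| \ge \lceil (2n+6)/3 \rceil$ once combined with the bridge analysis above.

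The principal obstacle is calibrating the inductive hypothesis so that gluing preserves the sharp bound. At each recursive step, vertices may fall off the combined Tutte path; to achieve constant $2/3$, every such loss must correspond to a genuine non-trivial bridge of the ambient essentially $4$-connected graph, each contributing two units to $|V(C)|$. The bookkeeping is most delicate for $2$-attachment bridges, whose interior vertex has degree $2$ and could in principle be absorbed into $C$ by a local rerouting; essential $4$-connectivity of $G$ must be invoked to prevent pathological chains of such absorbable bridges. A careful case analysis on bridge attachment counts, together with planarity constraints restricting how attachments can share cycle vertices, should close out the argument.
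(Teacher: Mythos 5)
Your overall strategy coincides with the paper's: reduce the circumference bound to counting the non-trivial bridges of a Tutte cycle (essential 4-connectivity forces each such bridge to have a single interior vertex, so $|V(C)|=n-b$ and the target becomes $b\le (n-6)/3$), and obtain that count from a quantitative strengthening of Thomassen's Tutte-path theorem. Your first paragraph is, in substance, exactly the paper's derivation in Section 5, and your description of the main lemma matches Theorem~\ref{technical}, which bounds the number of $P$-bridges on at least three vertices by $(n-6)/3$ plus three correction terms.

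The genuine gap is that the quantitative Tutte-path theorem is only gestured at, not stated precisely and not proved, and you yourself flag ``calibrating the inductive hypothesis so that gluing preserves the sharp bound'' as the principal unresolved obstacle. That calibration is the entire technical content of the paper: one must attach to each of the three arcs determined by $u$, $e$, $v$ a correction term $\taubridges{}{}\in\{0,1/3,2/3\}$ whose value depends on whether the arc is ``good'' (admits no shielding 2-separation) and on degenerate incidences with $e$, and then verify through a long induction (the 2-cut lemmas of Section 3 and the decomposition into $K$, the $J_i$'s and the $L_i$'s in Section 4) that every loss of a vertex from the glued path is paid for. Without an explicit form of these correction terms one cannot even write down an induction hypothesis that survives the gluing steps, so the argument as proposed does not close. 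A secondary inaccuracy: in the non-4-connected case the paper does not close the Tutte path along an arc of the outer cycle of $G$; it deletes a cubic vertex $x$, applies Theorem~\ref{technical} to $G-x$ with endpoints and distinguished edge chosen among the neighbors of $x$ so that the three correction terms sum to at most $1/3$, and closes the path through $x$. Closing along an outer arc of $G$ itself can incur endpoint corrections totalling $2$, which would destroy the additive constant $+6$; so the ``judicious choice'' you defer is itself a nontrivial step.
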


This bound is best possible in the following sense. Take a 4-connected triangulation $T$ on $k$ vertices, and inside each face of $T$ add a
new vertex and three edges from that new vertex to the three vertices in the boundary of that face. The resulting graph, say $G$, has
$n:=3k-4$ vertices. Now take an arbitrary cycle $C$ in $G$. For each $x\in V(C)$ with degree three in $G$,
we delete $x$ from $C$ and add the edge of $G$ between the two neighbors of
$x$ in $C$. This results in a cycle in $T$, say $D$. Clearly,  $\order{D}\le k$;
which implies $\order{C}\le 2k$. Hence, the circumference of $G$ is at most $2k=2(n+4)/3=\lceil (2n+6)/3\rceil$.

Our proof of Theorem~\ref{main} uses an idea from Thomassen
\cite{Th83} and Tutte
\cite{Tu56} --- finding a cycle $C$ in a 2-connected graph $G$ such that every component of $G-C$ has at most
three neighbors on $C$. 
This idea helps us avoid the difficulty of dealing with 4-connected graphs and has
been used before to find Hamilton cycles in 4-connected graphs.
We prove a quantitative version of a result  in  \cite{Th83} on such cycles by also controlling the number of components
of $G-C$, see Theorem~\ref{technical}. Our approach follows that of \cite{Th83}, but many adjustments are needed to complete the work.  

In Section 2, we introduce additional notation and terminology and  state a more technical result,
Theorem~\ref{technical}, from which Theorem~\ref{main} will follow.
In Section 3, we deal with special cases of Theorem~\ref{technical}  when
there exist certain 2-cuts in the graph. In Section 4, we complete the
proof of Theorem~\ref{technical}. In Section 5, we derive Theorem~\ref{main} from
Theorem~\ref{technical}.

We conclude this section with useful notation. We often use $\order{G}$ to denote the number of vertices in $G$, and  
represent a path by a sequence of vertices (with consecutive vertices being adjacent). 
For two graphs $G$ and $H$, we use $G\cup H$ and $G\cap H$ to denote the union and intersection of $G$ and $H$,
respectively.
For any $S\subseteq V(G)$, $G-S$ denotes the subgraph of $G$ obtained from $G$ by deleting all vertices in $S$ and all edges
of $G$ incident with $S$. We often write $G-H$ for $G-V(G\cap H)$. Moreover, for any family $T$ of 2-element subsets of $V(G)$ 
we use $G+T$ to denote the graph with
vertex set $V(G)$ and edge set $E(G)\cup T$. When $T=\{\{u,v\}\}$, we write $G+uv$ instead of  $G+\{\{u,v\}\}$.

For any positive integer $k$ and any graph $G$, a $k$-separation in $G$
is a pair $(G_1,G_2)$ of subgraphs of $G$ such that
$|V(G_1\cap G_2)|=k$, $G=G_1\cup G_2$, $E(G_1)\cap E(G_2)=\emptyset$, and $G_i\not\subseteq G_{3-i}$ for $i=1,2$.
A $k$-cut in $G$ is a set $S\subseteq V(G)$ with $|S|=k$ such that
there exists a separation $(G_1,G_2)$ in $G$
with $V(G_1\cap G_2)=S$ and $G_i-G_{3-i}\ne \emptyset$ for $i=1,2$.

\section{Tutte paths}

In order to find a long cycle in an
essentially 4-connected planar graph, we instead find a cycle $C$ in a
2-connected planar graph $G$ such that every component of
$G-C$ has at most three neighbors on $C$ and the number of
components of $G-C$ is as small as possible. Thus, we introduce the
concept of a
Tutte subgraph.

Let $G$ be a graph and $H\subseteq G$. An {\it $H$-bridge} of $G$ is a
subgraph of $G$ which is either induced by an edge in $E(G)\setminus E(H)$ with both
incident vertices on $H$, or induced by the edges of $G$ that are incident with one
or two vertices in a single component of $G-H$. Let
\begin{itemize}
\item [] $\bridges[G]{H} = |\{B: B \mbox{ is an $H$-bridge of $G$ and }
    |B|\ge 3\}$.  
\end{itemize} 
For any $H$-bridge $B$ of $G$, a vertex in $V(B\cap H)$ is called an {\it attachment} of $B$ on
$H$.   We say that $H$ is a {\it Tutte subgraph} of $G$ if every
$H$-bridge of $G$ has at most three attachments on $H$. Moreover,
for any subgraph $F\subseteq G$, $H$ is said to be an {\it $F$-Tutte}
subgraph of $G$ if $H$ is a Tutte subgraph of $G$ and every $H$-bridge
of $G$ containing an edge of $F$ has at most two attachments on
$H$. (The concept of $F$-Tutte subgraphs is introduced for
induction purpose.) 
A {\it Tutte cycle} (respectively, {\it Tutte path}) is a Tutte
subgraph that is a cycle (respectively, path).

Given a plane graph $G$ and a cycle $C$ in $G$, we say that $(G,C)$ is
a {\it circuit graph} if $G$ is 2-connected, $C$ is the outer cycle of
$G$ (i.e., $C$ bounds the infinite face of $G$), and, for any 2-cut $T$ in $G$, each component of
  $G-T$ must contain a vertex of $C$.  Note that $C$  has a clockwise
orientation and a counterclockwise orientation, and we may use the symmetry between these two orientations.
 For any distinct elements
$x,y\in V(C)\cup E(C)$, we use $xCy$ to denote the subpath of $C$ in
clockwise order from $x$ to $y$ such that $x,y\notin E(xCy)$. We
say that $xCy$ is {\it good} if $G$ has no 2-separation $(G_1,G_2)$
with $V(G_1\cap G_2)=\{s,t\}$ such that $x,s,t,y$ occur on $xCy$ in
order,  $sCt\subseteq G_2$, and $|G_2|\ge 3$. Moreover, let
\[
\taubridges[G]{xy} =
\begin{cases}
2/3, \quad & \text{$xCy$ is not good;}\\
2/3, \quad & \text{$|\{x,y\}\cap E(C)|=1$ and $x$ and $y$ are incident; }\\
1/3, \quad & \text{$|\{x,y\}\cap E(C)|=1$ and $|xCy|=2$;}\\
0, \quad & \text{otherwise.}
\end{cases}
\]

If there is no danger of confusion, we may drop the reference to $G$ in all of the above notation.
We can now state the result from which we will deduce Theorem~\ref{main}.

\begin{theo}\label{technical}
Let $n\ge 3$ be an integer, let $(G,C)$ be a circuit graph on
$n$ vertices, let $u,v \in \vertexset{C}$ be distinct, and
let $e\in  \edgeset{C}$, such that $u,e, v$ occur on $C$ in clockwise order.
Then $G$ has a $C$-Tutte path $P$ between $u$ and $v$ such that $e\in E(P)$ and
$$\bridges{P} \le (n - 6)/3 + \taubridges{vu} + \taubridges{ue} + \taubridges{ev}.$$
\end{theo}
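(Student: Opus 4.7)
The approach is a strong induction on $n = |V(G)|$, adapting Thomassen's proof~\cite{Th83} of the existence of Tutte paths in circuit graphs while tracking bridges quantitatively. For the base cases of small $n$, we verify the inequality by direct computation of $\bridges{P}$ and the three $\taubridges{\cdot}$ terms (these are the cases where $G$ is essentially forced and only a handful of configurations of $u$, $v$, $e$ on $C$ arise). For the inductive step, we distinguish two regimes: first, $G$ contains a 2-cut that interacts with $u$, $v$, or $e$ in a way permitting a clean decomposition (the province of Section~3); second, no such 2-cut exists, handled via face-peeling.

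For the first regime, the lemmas of Section~3 split $(G,C)$ into two strictly smaller circuit graphs $(G_1,C_1)$ and $(G_2,C_2)$ along a 2-separation; for each we apply the inductive hypothesis with carefully chosen endpoints and required edges, then concatenate the $C_j$-Tutte paths to form $P$ in $G$ and verify the combined bridge count. The values of $\taubridges{xy}$ are designed to compensate exactly for the corrections incurred: the $2/3$ penalties absorb contributions from "bad arcs" and from incident vertex/edge pairs on $C$, while the $1/3$ penalty handles the length-two incident configuration.

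In the second regime, $G$ has no reducible 2-cut, so we apply Thomassen-style face-peeling. Let $F$ be the bounded face of $G$ whose boundary cycle $D$ contains $e$, and write $D = e \cup Q$ where $Q = v_0 v_1 \cdots v_k$ is a path with $e = v_0 v_k$. If some internal vertex $v_i$ of $Q$ lies on $C$, then $v_0 Q v_i$ together with an arc of $C$ bounds a subgraph $G_1$ of $G$, yielding a 2-separation $(G_1, G_2)$ with $|V(G_1 \cap G_2)| = 2$; apply induction to each side with appropriate endpoints and required edges, and concatenate. Otherwise $V(Q) \cap V(C) = \{v_0, v_k\}$, and we split at $\{v_0, v_1\}$ or $\{v_{k-1}, v_k\}$, which is a 2-cut because the obstructing structure has already been excluded; the argument then proceeds analogously.

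The main obstacle will be the quantitative bridge accounting. In any 2-separation with $|V(G_i)| = n_i$, we have $n_1 + n_2 = n + 2$, so the raw sum of inductive bounds gives only $(n_1 - 6)/3 + (n_2 - 6)/3 = (n - 10)/3$, which is $4/3$ short of the target $(n - 6)/3$. This slack must absorb (i) the at most one new $P$-bridge of size $\geq 3$ created when the two subpaths are glued along the cut vertices, and (ii) any mismatch between the $\taubridges{\cdot}$ terms of the sub-problems and those of $(G,C)$. Verifying that the budget suffices in every configuration requires a careful case-by-case analysis in which each possible value $0$, $1/3$, $2/3$ of $\taubridges{\cdot}$ is matched with a specific decomposition scenario; this is the delicate refinement of~\cite{Th83} needed to promote Thomassen's qualitative construction to the quantitative bound stated in Theorem~\ref{technical}.
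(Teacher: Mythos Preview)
Your high-level scaffolding (induction on $n$, the base cases, and invoking the Section~3 lemmas for the ``reducible 2-cut'' regime) matches the paper. The second regime, however, is not what the paper does, and your sketch of it has a genuine gap.

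The paper does \emph{not} do face-peeling at the inner face incident with $e$. After the 2-cut lemmas have been applied, it removes the \emph{entire arc} $eCv$ and lets $H$ be the block of $G-eCv$ containing $uCe$. From $H$ it forms a contracted graph $K$ by collapsing each maximal 2-separation on the far side $v'C'u$ of $H$'s outer cycle to a single degree-2 vertex (the set $T$). Step~(3) of the proof finds, by induction and several subcases, a $D$-Tutte path $P_K$ in $K$ with $\bridges[K]{P_K}\le(|K|-6)/3+\taubridges[G]{ue}+c$ where $c\in\{2/3,1\}$ depends on $|T|$. The path $P$ in $G$ is then built by extending $P_K-T$ along $eCv$ through a chain of pieces $J_1,L_1,J_2,\ldots,L_{m-1},J_m$, where each $J_i$ absorbs an equivalence class of $(H\cup eCv)$-bridges together with (possibly) one contracted $t_i\in T$. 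The $J_i$ with $1<i<m$ are sorted into three families ${\cal J}_1,{\cal J}_2,{\cal J}_3$ according to whether $|J_i\cap(P_K-T)|=1$ or $2$ and whether $t_i\in V(P_K)$; each family gets its own inductive bound (steps~(5)--(7)), and $J_1,J_m,L_i$ are handled separately (steps~(4),(8),(9)). The final accounting sums these with a careful tally of the $1,2,$ or $4$ vertices double-counted at each interface, and the crucial savings come precisely from the $-|T|/3$ and $-|{\cal J}_1|/3$ terms that this organization produces.

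Your face-peeling plan provides none of this structure, and it breaks already at the decomposition step: when $V(Q)\cap V(C)=\{v_0,v_k\}$, the pair $\{v_0,v_1\}$ need not be a 2-cut of $G$ at all --- $v_1$ is an interior vertex and may have neighbors throughout $G$, so ``the obstructing structure has already been excluded'' is unjustified. Even in cases where some such cut exists, a single-face removal gives you two pieces whose $\taubridges{}$-terms you have not related to those of $G$, and the $4/3$ slack you correctly identify is not enough once the pieces can themselves spawn many bridges along the newly exposed boundary; this is exactly why the paper needs the $J_i/L_i$ machinery and the $|T|$-dependent bound in step~(3) rather than a one-shot split.
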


To help the reader digest the notation involved, we illustrate this
statement with two cases: $e=uv$, and $\order{G}=3$.

\begin{lem}\label{base}
Theorem~\ref{technical} holds when $e=uv$ or  $\order{G}=3$.
\end{lem}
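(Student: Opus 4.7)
The plan is to verify Theorem~\ref{technical} directly in each of the two base cases by constructing a canonical Tutte path $P$ and comparing $\bridges{P}$ against the right-hand side term by term. In both cases the requirement $e \in E(P)$ essentially forces the choice of $P$, reducing the argument to a check against the definitions.

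\emph{Case $e = uv$.} The only $u$--$v$ path containing the edge $uv$ is the single-edge path $P = uv$. Since $V(P) = \{u, v\}$, every $P$-bridge has at most two attachments, so $P$ is automatically a $C$-Tutte path. The non-trivial $P$-bridges correspond to components of $G - \{u, v\}$, and I would show that this subgraph is connected: the clockwise arc $vCu$ of $C$ (which avoids $e$) is a path in $G$ containing every vertex of $V(C) \setminus \{u, v\}$, and the circuit-graph hypothesis forbids any extra component (such a component would contain no vertex of $C$). Hence $\bridges{P} = 1$. For the upper bound, $\taubridges{ue} = \taubridges{ev} = 2/3$ because $e$ is incident to both $u$ and $v$. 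The key observation is that $\taubridges{vu} = 2/3$ as well, witnessed by the 2-separation $(G_1, G_2)$ with $G_1$ consisting only of the edge $uv$ (together with its endpoints) and $G_2 = G - uv$: here $V(G_1 \cap G_2) = \{u, v\}$, the separator $\{s,t\} = \{u,v\}$ satisfies $sCt = vCu \subseteq G_2$, and $|G_2| = n \ge 3$. Summing the contributions gives $(n-6)/3 + 2 = n/3 \ge 1 = \bridges{P}$.

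\emph{Case $\order{G} = 3$.} A 2-connected graph on three vertices is $K_3$, so $G = C$. If $e = uv$ the previous case already applies. Otherwise let $w$ be the third vertex; then the hypothesis $u,e,v$ clockwise forces the clockwise vertex order $u, w, v$ and $e \in \{uw, wv\}$. I would take $P$ to be the Hamilton path $u, w, v$, which contains $e$. The only $P$-bridge is the trivial edge $uv$, which has two attachments, so $P$ is a $C$-Tutte path and $\bridges{P} = 0$. The bound evaluates to exactly $0$: $\taubridges{vu} = 0$ (the only candidate 2-separation with separator $\{u,v\}$ that gives $|G_2| \ge 3$ forces $uv \in E(G_1)$, so $vCu \not\subseteq G_2$), while one of $\taubridges{ue}, \taubridges{ev}$ equals $2/3$ (from the endpoint of $e$ lying in $\{u, v\}$) and the other equals $1/3$ (from the corresponding arc having exactly two vertices).

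The one conceptual point is the observation in Case~1 that $vCu$ is always not good when $e = uv$, via the trivial 2-separation that peels the edge $uv$ off from the rest of $G$. Without the resulting $\taubridges{vu} = 2/3$, the right-hand side would be only $(n-2)/3$, which is too small for $n \in \{3,4\}$. Everything else is a short verification against the definitions of $\bridges{\cdot}$, $C$-Tutte path, and $\taubridges{\cdot}$.
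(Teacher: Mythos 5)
Your proof is correct and follows essentially the same route as the paper: take $P=uv$ in the first case (noting $\taubridges{vu}=2/3$ via the 2-separation peeling off the edge $uv$, and $\taubridges{ue}=\taubridges{ev}=2/3$ by incidence) and $P=C-uv$ in the second, then verify the $\tau$ values against the definitions. Your extra observation that the circuit-graph condition forces $G-\{u,v\}$ to be connected (so $\bridges{P}=1$ exactly) is a small refinement of a point the paper states without justification, but it does not change the argument.
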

\begin{proof}
 As $G$ is 2-connected, we have $|G| \ge 3$.  First, suppose
 $e=uv$. Then $vCu$ is not good because of the 2-separation $(uCv, G-uv)$; so
 $\taubridges{vu}=2/3$. Moreover, since $u,v$ are both incident with
 $e$, $\taubridges{ue}=\taubridges{ev}=2/3$. Hence,   $P:=vu$ gives the
desired path as  $\bridges{P}=1 \le (\order{G} - 6)/3 + \taubridges{vu}
+ \taubridges{ev} + \taubridges{ue}$.

Now assume $e\ne uv$ and $\order{G}=3$. Further assume by symmetry that $u$ is not
incident with $e$. Then  $\taubridges{vu}=0$, $\taubridges{ue}=1/3$, and
$\taubridges{ev}=2/3$. Hence, $P:=C-uv$ gives the desired path as
$\bridges{P}=0=(|G|- 6)/3 + \taubridges{vu}
+ \taubridges{ue} + \taubridges{ev}$.
\end{proof}

\section{Special 2-cuts}

In this section, we deal with two cases when the plane graph $G$ in Theorem~\ref{technical} has certain 2-cuts. In the first case,
$G$ has a 2-cut separating $\{u,v\}$ from $e$. We formulate it as a lemma.

\begin{lem}\label{2cut}
Suppose $n\ge 4$ is an integer and Theorem~\ref{technical} holds for graphs on at most
$n-1$ vertices. Let $(G,C)$ be a circuit graph on $n$ vertices, $u,v \in \vertexset{C}$ be distinct, and
 $e\in  \edgeset{C}$, such that $u,e,v$ occur on $C$ in clockwise order.

 If $G$ has a 2-separation $(G_1,G_2)$ such that $\{u,v\}\subseteq V(G_1)$, $\{u,v\}\not\subseteq V(G_2)$,  $e\in E(G_2)$,
 and $\order{G_2}\ge 3$,  then
$G$ has a $C$-Tutte path $P$ between $u$ and $v$ such that $e\in E(P)$ and
$\bridges{P} \le (n - 6)/3 + \taubridges{vu} + \taubridges{ue} + \taubridges{ev}$.
 \end{lem}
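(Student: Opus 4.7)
My strategy is to split $G$ along the given 2-cut, apply Theorem~\ref{technical} inductively on each side, and glue the resulting Tutte paths across the cut. Write $V(G_1\cap G_2)=\{x,y\}$; since $(G,C)$ is a circuit graph and each side of the separation must meet $C$, we have $x,y\in V(C)$, and $\{x,y\}$ splits $C$ into two arcs---the one through $e$ lying in $G_2$ and the other in $G_1$. After relabeling we may assume $u,x,e,y,v$ appear in this clockwise order on $C$, with possibly $u=x$ or $v=y$ but not both (the latter is ruled out by $\{u,v\}\not\subseteq V(G_2)$).

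Next I form $G_1':=G_1+xy$ and $G_2':=G_2+xy$, with outer cycles $C_1:=(yCx)\cup\{xy\}$ and $C_2:=(xCy)\cup\{xy\}$ (if $xy$ already lies in $E(G)$, a minor modification handles that subcase). Routine checks using the 2-connectivity of $G$ and the circuit-graph hypothesis show each $(G_i',C_i)$ is again a circuit graph on at most $n-1$ vertices, since $|G_1|+|G_2|=n+2$ and both $|G_i|\ge 3$; so the inductive hypothesis applies. I apply Theorem~\ref{technical} to $(G_1',C_1)$ with endpoints $u,v$ and distinguished edge $xy$ (noting that $u,xy,v$ occur in clockwise order on $C_1$) to obtain a $C_1$-Tutte path $P_1$ through $xy$, and to $(G_2',C_2)$ with endpoints $x,y$ and distinguished edge $e$ to obtain a $C_2$-Tutte path $P_2$ through $e$. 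Set $P:=(P_1-xy)\cup P_2$, which is a path in $G$ from $u$ to $v$ through $e$. Since $\{x,y\}\subseteq V(P)$, every $P$-bridge of $G$ lies entirely in $G_1$ or in $G_2$ and coincides with a $P_i$-bridge of $G_i'$; the added edge $xy$ is an edge of $P_1$ in $G_1'$ and contributes at most a two-vertex (uncounted) bridge of $P_2$ in $G_2'$. Hence $P$ is a $C$-Tutte path of $G$ and $\beta_G(P)=\beta_{G_1'}(P_1)+\beta_{G_2'}(P_2)$.

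The remaining step is the bridge-count inequality, which I expect to be the main obstacle. Summing the two inductive bounds and using $|G_1|+|G_2|=n+2$ reduces the target to
\[
\tau^{(1)}_{vu}+\tau^{(1)}_{u,xy}+\tau^{(1)}_{xy,v}+\tau^{(2)}_{yx}+\tau^{(2)}_{xe}+\tau^{(2)}_{ey}\le \taubridges{vu}+\taubridges{ue}+\taubridges{ev}+\tfrac{4}{3},
\]
where $\tau^{(i)}$ denotes $\tau$ computed in $(G_i',C_i)$. I plan to verify this by case analysis on whether $u=x$, whether $v=y$, and on the goodness of the relevant arcs, leaning on three observations: (i) the arc $vCu$ is identical in $G$ and in $G_1'$, and any bad 2-separation of one pulls back to the other, giving $\tau^{(1)}_{vu}=\taubridges{vu}$; (ii) the circuit-graph hypothesis forces $G_2-\{x,y\}$ to be connected (any component of $G_2-\{x,y\}$ must contain a vertex of $C$, but the only $C$-vertices available inside $G_2$ form the connected path $xCy\setminus\{x,y\}$), so $\{x,y\}$ is not a 2-cut of $G_2'$ and $\tau^{(2)}_{yx}=0$; (iii) the four terms around the new edge $xy$, namely $\tau^{(1)}_{u,xy}$, $\tau^{(1)}_{xy,v}$, $\tau^{(2)}_{xe}$, $\tau^{(2)}_{ey}$, can be paired against $\taubridges{ue}$ and $\taubridges{ev}$ by relating subproblem arcs to the original arcs $uCe$ and $eCv$. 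Any ``not good'' witness or ``incident-edge'' contribution in a subproblem either corresponds to an obstruction already counted in $G$ or is absorbed by the $\tfrac{4}{3}$ slack, which is precisely the price of the two artificial incidences of the cut-edge $xy$ at $x$ and at $y$.
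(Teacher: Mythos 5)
Your proposal follows the paper's proof essentially verbatim: the same split along the 2-cut $\{x,y\}$, the same augmented circuit graphs $G_i'=G_i+xy$ with the inductive hypothesis applied to each, the same glued path $(P_1-xy)\cup P_2$, and the same key facts $\tau^{(1)}_{vu}=\taubridges{vu}$ and $\tau^{(2)}_{yx}=0$ together with pairing the four remaining $\tau$-terms against $\taubridges{ue}$ and $\taubridges{ev}$ with total slack $4/3$. The only difference is that the paper carries out in full the case analysis (on the values $0$, $1/3$, $2/3$ of $\taubridges{ue}$ and $\taubridges{ev}$) that you leave as a plan, and that analysis goes through exactly as you anticipate.
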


\begin{proof}
Let $\vertexset{G_1 \cap G_2} = \{x,y\}$ with $x \in V(eCv)$ and $y \in V(uCe)$.  
See Figure~\ref{2cut1}. Let $G'_i = G_i + xy$ for $i \in \{1,2\}$ such that $G_1'$ is a
plane graph with outer cycle $C_1:=xCy+yx$ and $G_2'$ is a plane graph with
outer cycle $C_2:=yCx+xy$.  Note that both $(G_1',C_1)$ and $(G_2',C_2)$ are circuit graphs.  Let
$e_1:=xy$,  $n_1 := |G_1'|$, and $n_2: = |G_2'|$. Then  $n_1 + n_2 = n
+ 2$. Since $\{u,v\}\not\subseteq V(G_2)$, we may assume by symmetry that $u\ne y$.

\begin{figure}[ht]
\begin{center}
        \includegraphics[scale=0.3]{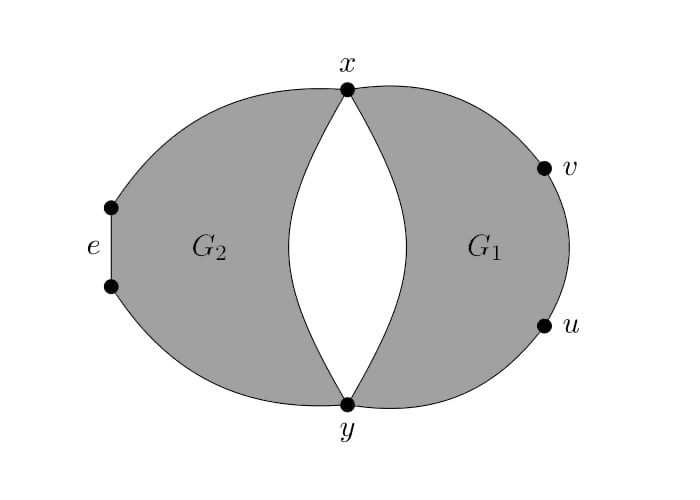}
           \caption{The separation $(G_1,G_2)$ in $G$. }
             \label{2cut1}
\end{center}
    \end{figure}

By assumption,  $G_1'$ has a $C_1$-Tutte path between $u$ and $v$ such that $e_1\in
E(P_1)$ and  $$ \bridges[G_1']{P_1} \le (n_1 - 6)/3 +
\taubridges[G_1']{vu} + \taubridges[G_1']{ue_1} +
\taubridges[G_1']{e_1v},$$
and  $G_2'$ has a $C_2$-Tutte path $P_2$ between $x$ and $y$ such that $e\in E(P_2)$
and  $$\bridges[G_2']{P_2} \le (n_2 - 6)/3 + \taubridges[G_2']{xy} + \taubridges[G_2']{ye} + \taubridges[G_2']{ex}.$$

Note that  $P := (P_1 \cup P_2) - e_1$ is a $C$-Tutte
path in $G$ between $u$ and $v$ such that $e\in E(P)$. Moreover, $\taubridges[G_1']{vu} = \taubridges[G]{vu}$ and
$\taubridges[G_2']{xy} = 0$. Thus,  
  \[
         \bridges[G]{P}  = \bridges[G_1']{P_1} + \bridges[G_2']{P_2}\le (n-6)/3 -4/3+\taubridges[G]{vu} + 
\taubridges[G_1']{ue_1} +\taubridges[G_1']{e_1v} +\taubridges[G_2']{ye}+ \taubridges[G_2']{ex}.
  \]       

We claim that $\taubridges[G_1']{e_1v}+\taubridges[G_2']{ex}\le \taubridges[G]{ev}+2/3$. This is clear if $ \taubridges[G]{ev}=2/3$. 
 If $\taubridges[G]{ev}=1/3$ then $|eCv|=2$ and, hence, $|eCx|=2$ or
 $|e_1Cv|=2$;  so $\taubridges[G_1']{e_1v}=1/3$ or
 $\taubridges[G_2']{ex}=1/3$, and the inequality holds as well. 
 Now assume $\taubridges[G]{ev}=0$. Then $|eCv|\ge 3$ and $eCv$ is
 good in $G$. So $|e_1Cv|\ge 3$ and $e_1Cv$ is good in $G_1'$, or 
$|eCx|\ge 3$ and $eCx$ is good in $G_2'$, or $|e_1Cv|=|eCx|=2$. Hence,
$\taubridges[G_1']{e_1v}=0$, or $\taubridges[G_2']{ex}=0$, or $\taubridges[G_1']{e_1v}=\taubridges[G_2']{ex}=1/3$. 
Again we see that  the inequality holds.

Similarly,  $\taubridges[G_1']{ue_1}+\taubridges[G_2']{ye}\le \taubridges[G]{ue}+2/3$.  So 
$\bridges[G]{P}\le  (n-6)/3 +\taubridges[G]{vu} + \taubridges[G]{ue} + \taubridges[G]{ev}$.
\end{proof}

The next lemma deals with a different type of 2-cuts in the graph $G$ in Theorem~\ref{technical}.

\begin{lem}\label{2cut-1}
Suppose $n\ge 4$ is an integer and Theorem~\ref{technical} holds for graphs
on at most
$n-1$ vertices. Let $(G,C)$ be a circuit graph on
$n$ vertices,   $u,v \in \vertexset{C}$ be distinct, and
 $e=xy\in  \edgeset{C}$, such that $u,x,y,v$ occur on $C$ in clockwise order.

If $\{u,x\}$ or $\{v,y\}$ is a 2-cut in $G$ then $G$ has a $C$-Tutte
path $P$ between $u$ and $v$ such that $e\in E(P)$ and $\bridges[G]{P}\le
(n-6)/3+\taubridges[G]{vu}+\taubridges[G]{ue}+\taubridges[G]{ev}.$
\end{lem}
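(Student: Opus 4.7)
We may assume by symmetry that $\{u,x\}$ is a 2-cut of $G$. Since $(G,C)$ is a circuit graph, there is a 2-separation $(H_1,H_2)$ of $G$ with $\vertexset{H_1\cap H_2}=\{u,x\}$, $H_2\supseteq xCu$ (hence $y$, $v$, and the edge $e$ lie in $H_2$), and $H_1\supseteq uCx$; the circuit-graph hypothesis forces at least one internal vertex on $uCx$, so $\order{H_1}\ge 3$. Taking such a separation with $\order{H_1}$ minimal, we may further arrange that $H_1-\{u,x\}$ is connected. Set $G_i:=H_i+ux$ (adding $ux$ to $H_i$ if absent) and let $C_i$ denote the outer cycle of $G_i$; then $(G_1,C_1)$ with $C_1=uCx+xu$ and $(G_2,C_2)$ with $C_2=xCu+ux$ are both circuit graphs on $n_i:=\order{H_i}$ vertices, with $n_1+n_2=n+2$ and $3\le n_i\le n-1$.

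The plan is first to apply Theorem~\ref{technical} inductively to $(G_2,C_2)$ with the triple $(u,e,v)$ (the cyclic order $u,e,v$ on $C_2$ is preserved), obtaining a $C_2$-Tutte path $P_2$ from $u$ to $v$ containing $e$. Since $u$ is an endpoint of $P_2$ and $ux\in\edgeset{G_2}$ is incident to $u$, either $ux$ is the first edge of $P_2$ (Case~1) or $ux\notin\edgeset{P_2}$ (Case~2). In Case~1, apply Theorem~\ref{technical} inductively to $(G_1,C_1)$ with endpoints $u,x$ and a specified edge $e_1\in\edgeset{uCx}$ chosen to control the $\tau$-terms, obtaining a $C_1$-Tutte path $P_1$ from $u$ to $x$ through $e_1$; then $P:=P_1\cup(P_2-u)$ is a $C$-Tutte path in $G$ from $u$ to $v$ containing $e$, with $\bridges[G]{P}\le\bridges[G_1]{P_1}+\bridges[G_2]{P_2}$. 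In Case~2, set $P:=P_2$; the set $\vertexset{H_1}\setminus\{u,x\}$ forms the vertex set of one additional $P$-bridge in $G$, with attachments $\{u,x\}\subseteq\vertexset{P}$, so $\bridges[G]{P}\le\bridges[G_2]{P_2}+1$.

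The quantitative verification reduces to a $\tau$-calculation analogous to that in Lemma~\ref{2cut}. The key facts are: $\taubridges[G_1]{xu}=0$ and $\taubridges[G_2]{ue}=1/3$, since the arcs $xC_1u$ and $uC_2e$ each have only two vertices and are vacuously good (with $e$ incident to $x$ but not to $u$); $\taubridges[G_2]{vu}\le\taubridges[G]{vu}$ and $\taubridges[G_2]{ev}\le\taubridges[G]{ev}$, since every 2-separation of $G_2$ is also a 2-separation of $G$; and, crucially, the 2-separation $(H_2,H_1)$ itself witnesses that the arc $uCe$ is not good in $G$, forcing $\taubridges[G]{ue}=2/3$, which supplies the extra budget. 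With these substitutions, Case~1 closes for essentially any choice of $e_1$, and Case~2 closes whenever $n_1\ge 4$.

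The main obstacle will be Case~2 when $n_1$ is very small, specifically $n_1=3$, where $H_1$ collapses to a single degree-$2$ vertex $v_1$ adjacent only to $u$ and $x$ in $G$. There the $+1$ from the $H_1$-bridge exceeds the $(n_1-2)/3=1/3$ budget gain, and the argument must either force Case~1 or bypass it entirely. I plan to treat this sub-case by a separate reduction: apply Theorem~\ref{technical} to the smaller circuit graph $G^*:=(G-v_1)+ux$ (with outer cycle $C^*:=(C-v_1)+ux$) and the triple $(u,e,v)$; when the resulting Tutte path uses the edge $ux$, expand it into the subpath $u,v_1,x$ to recover a $C$-Tutte path in $G$ containing $v_1$ with no new bridge, and otherwise apply Theorem~\ref{technical} directly to $(G_2,C_2)$ with endpoints $(x,v)$ and specified edge $e$ and then prepend the forced subpath $u,v_1,x$, using $\taubridges[G]{ue}=2/3$ and the goodness of the relevant arcs in $G$ to close the accounting.
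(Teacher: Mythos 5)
Your decomposition and the first two cases are sound, but the argument hinges on a dichotomy you cannot control, and the sub-case you yourself flag as the "main obstacle" is not actually closed by your proposed fallback. Your case split is on whether the path $P_2$ returned by the induction hypothesis happens to use the chord $ux$; the induction hypothesis only asserts existence of \emph{some} Tutte path, so you must fully handle the branch where $ux\notin E(P_2)$, and when $n_1=3$ that branch costs $1/3$ too much. Your fallback for $n_1=3$ has two concrete defects. First, applying Theorem~\ref{technical} to $(G_2,C_2)$ with endpoints $(x,v)$ replaces the term $\taubridges[G]{vu}$ by $\taubridges[G_2]{vx}$, and the arc $vC_2x$ passes through $u$ and the chord $ux$; a $2$-cut of the form $\{s,x\}$ with $s\in V(vCu)$ can make $vC_2x$ not good in $G_2$ (so $\taubridges[G_2]{vx}=2/3$) while $vCu$ remains good in $G$ (so $\taubridges[G]{vu}=0$) --- for instance, outer cycle $u,v_1,x,y,v,s$ with one interior vertex adjacent exactly to $u,x,s$. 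Your slack in that branch is only $1/3$, so the bound fails. Second, the path $P_2'$ from $x$ to $v$ need not contain $u$; prepending $uv_1x$ then turns $u$ from an interior vertex of a $P_2'$-bridge into a path vertex, which can split that bridge into several $P$-bridges, so $\bridges[G]{P}=\bridges[G_2]{P_2'}$ is not justified either.

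The paper avoids both problems by splitting on a \emph{structural} condition rather than on which path the induction returns: whether the $e$-side of the separation (your $H_2$) is $2$-connected. If it is, the theorem is applied to it \emph{without} adding the chord $ux$; its outer cycle then detours from $u$ to $x$ through at least one interior vertex, the maximality of the other side gives $\taubridges{ue}=0$ there, and the single extra bridge on the $u$--$x$ side costs $+1$, which is paid by $\taubridges[G]{ue}=2/3$ together with the $1/3$ gained from $\order{H_2}\le n-1$. If it is not $2$-connected, then every Tutte path from $u$ to $v$ in $H_2+ux$ is \emph{forced} to use $ux$ (a cut vertex of $H_2$ obstructs any $u$--$v$ path avoiding the chord), so the gluing of your Case~1 always applies. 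To repair your proof you would need a comparable forcing mechanism or a different reduction for the $n_1=3$, $ux\notin E(P_2)$ configuration; as written, that configuration is open.
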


\begin{proof}   Suppose $\{u,x\}$ or $\{v,y\}$ is a 2-cut in $G$, say
  $\{u,x\}$ by symmetry. See Figure~\ref{2cut2}. Then $G$ has a
2-separation $(G_1,G_2)$ such that $xCu\subseteq  G_1$, $uCx\subseteq G_2$,
and $\order{G_2}\ge 3$. We choose $(G_1,G_2)$ so that $G_2$ is maximal. Then $ux\notin E(G_1)$. Note that $\taubridges[G]{ue}=2/3$.

\begin{figure}[ht]
\begin{center}
        \includegraphics[scale=0.3]{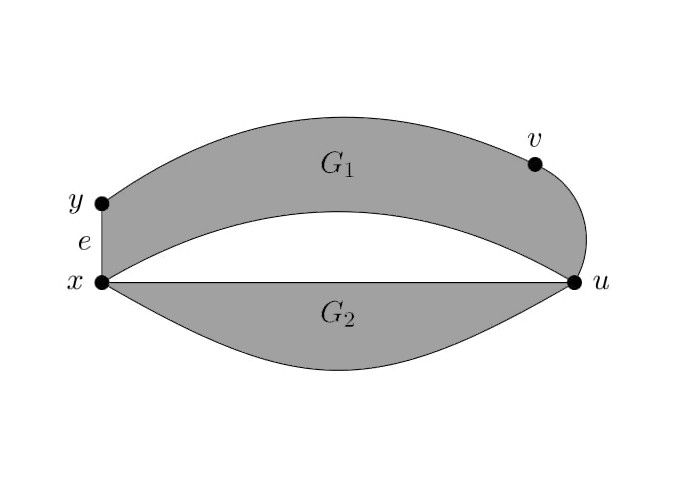}
           \caption{The separation $(G_1,G_2)$ in $G$. }
             \label{2cut2}
\end{center}
    \end{figure}

\medskip

{\it Case} 1.  $G_1$ is 2-connected.

Then let $C_1$ denote the outer cycle of $G_1$. Since $(G,C)$ is a
circuit graph, $(G_1,C_1)$ is a circuit graph.
By assumption,
$G_1$ has a $C_1$-Tutte path $P_1$ between $u$ and $v$ such that $e\in E(P_1)$ and
\[
 \bridges[G_1]{P_1}\le (\order{G_1}-6)/3+\taubridges[G_1]{vu}+\taubridges[G_1]{ue}+\taubridges[G_1]{ev}.
\]
 Note that
$\taubridges[G_1]{vu}=\taubridges[G]{vu}$, $\taubridges[G_1]{ue}=0$ (as $ux\notin E(G_1)$), 
and $\taubridges[G_1]{ev}=\taubridges[G]{ev}$. So 
\[
\bridges[G]{P_1}=\bridges[G_1]{P_1}+1\le (\order{G}-6)/3+\taubridges[G]{vu}+\taubridges[G]{ue}+\taubridges[G]{ev},
\]
and $P_1$ gives the desired path $P$.

\medskip

{\it Case} 2. $G_1$ is not 2-connected.

Let $G_1':=G_1+ux$ be the plane graph with outer cycle $C_1:=xCu+ux$,
and let $G_2':=G_2+xu$ be the plane graph with outer cycle $C_2:=uCx+xu$.
Since $(G,C)$ is a circuit graph, we see that both $(G_1',C_1)$ and $(G_2',C_2)$
are circuit graphs.

Note that $\taubridges[G_1']{vu}=\taubridges[G]{vu}$,
$\taubridges[G_1']{ue}=1/3$,  and $\taubridges[G_1']{ev}=
\taubridges[G]{ev}$.
By assumption, $G_1'$ has a $C_1$-Tutte path $P_1$ between $u$ and
$v$ such that $e\in E(P_1)$ and
\[
 \bridges[G_1']{P_1}\le
 (\order{G_1'}-6)/3+\taubridges[G_1']{vu}+\taubridges[G_1']{ue}+\taubridges[G_1']{ev}
=
(\order{G_1}-6)/3+\taubridges[G]{vu}+(\taubridges[G]{ue}-1/3)+\taubridges[G]{ev}.
\]
Since $G_1$ is not 2-connected,
$ux\in E(P_1)$. 

Choose $e'\in E(uC_2x)$ such that
$\taubridges[G_2']{e'x}=1/3$ and  $\taubridges[G_2']{ue'}\le
2/3$.  Note that $\taubridges[G_2']{xu}=0$. By assumption, $G_2'$ has a $C_2$-Tutte path
$P_2$ between $x$ and $u$ such that $e'\in E(P_2)$ and
\[
\bridges[G_2']{P_2} \le
(\order{G_2'}-6)/3+\taubridges[G_2']{xu}+\taubridges[G_2']{ue'}+\taubridges[G_2']{e'x}
\le (\order{G_2}-6)/3+1.
\]
Now $P:=(P_1-ux)\cup P_2$ is a $C$-Tutte path in $G$ between $u$
and $v$ such that $e\in E(P)$. Moreover, 
\begin{align*}
 \bridges[G]{P} & =\bridges[G_1']{P_1}+\bridges[G_2']{P_2}\\
   & \le (\order{G_1}-6)/3+\taubridges[G]{vu}+(\taubridges[G]{ue}-1/3)+\taubridges[G]{ev}+(\order{G_2}-6)/3+1\\
   &
   <(n-6)/3+\taubridges[G]{vu}+\taubridges[G]{ue}+\taubridges[G]{ev}.
\end{align*}
So $P$ is the desired path.
\end{proof}

\section{Proof of Theorem~\ref{technical}}
We apply induction on $n$. 
By Lemma~\ref{base} and by symmetry, we may assume that $u$ is not incident with $e$,
$|G|=n\ge 4$, and  the assertion
holds for graphs on at most $n-1$ vertices. Let $e=v'v''$ such that
$u,v',v'',v$ occur on $C$ in clockwise order.
Then by Lemma~\ref{2cut-1},
\begin{itemize}
\item [(1)] neither $\{u,v'\}$ nor $\{v,v''\}$ is a 2-cut in $G$.
\end{itemize}

Moreover, by Lemma~\ref{2cut}, we may assume that $G$ has no 2-cut $T$
such that $T\ne \{u,v\}$ and $T$
separates $e$ from $\{u,v\}$. Thus, by planarity, $uCe$ is contained
in a block of $G-eCv$, which is denoted by $H$.
See Figure~\ref{structure}.  Note that $H\cong K_2$ or $H$ is 2-connected. We may assume that

\begin{itemize}
\item [(2)] $H$ is 2-connected.
\end{itemize}
For, suppose that $H\cong K_2$.
Then $v'$ must have degree 2 in $G$ and $G-v'$ is 2-connected; for otherwise, by planarity,  there
exists a vertex $z\in V(v''Cv)$ such that $\{v',z\}$ is a 2-cut in $G$
separating $e$ from $\{u,v\}$, contradicting  Lemma~\ref{2cut}.
Let $C':=v''Cu+uv''$ be the outer cycle of the plane graph $G':=(G-v')+uv''$, and let
$e':=uv''$. Note that $(G',C')$ is a circuit graph,  $\taubridges[G']{ue'}=2/3=\taubridges{ue}+1/3$, $\taubridges[G']{e'v}=\taubridges[G]{ev}$, and
$\taubridges[G']{vu}=\taubridges[G]{vu}$. Hence, by induction hypothesis, $G'$ has a $C'$-Tutte path $P'$ between $u$ and $v$ such that
$e'\in E(P')$ and
$$\bridges[G']{P'} \le (\order{G'} - 6)/3 +\taubridges[G']{vu}+  \taubridges[G']{ue'}+\taubridges[G']{e'v} =(n-6)/3
+ \taubridges[G]{vu} + \taubridges[G]{ue} + \taubridges[G]{ev}.$$
Now $P:=(P'-e')\cup uv'v''$ is a $C$-Tutte path in $G$ between $u$ and $v$ such that $e\in E(P)$ and
$\bridges[G]{P} \le (n-6)/3
+ \taubridges[G]{vu} + \taubridges[G]{ue} + \taubridges[G]{ev}$. $\Box$

\begin{figure}[ht]
\begin{center}
        \includegraphics[scale=0.3]{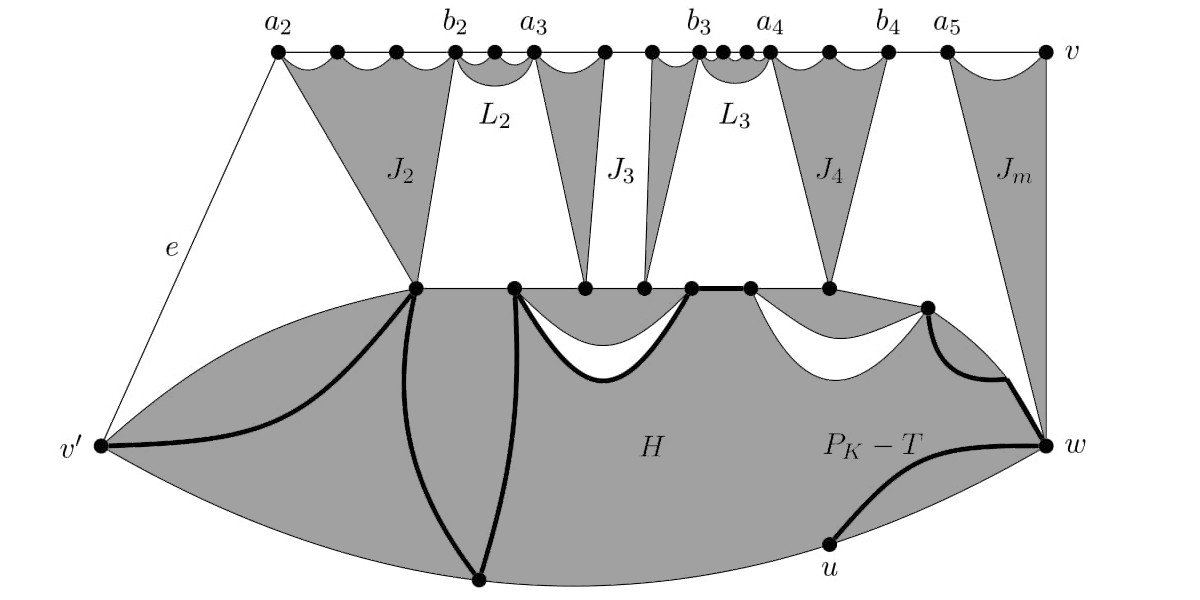}
           \caption{The subgraph $H$ of $G$ and the bridges between $eCv$ and $H$. }
             \label{structure}
\end{center}
    \end{figure}

By (2), let $C'$ denote the outer cycle of $H$. Our strategy is
to use induction hypothesis to find a path in $H$ and extend it to the desired
path in $G$ along $eCv$. 
To do so, we need to avoid double counting too many vertices and, hence,
we will need to contract some subgraphs of $H$.
A 2-separation $(H_1,H_2)$ in $H$ with $V(H_1\cap H_2)\subseteq V(v'C'u)$ is said to be
{\it maximal} if there is no 2-separation $(H_1',H_2')$ in $H$ with
$V(H_1'\cap H_2')\subseteq V(v'C'u)$, such that
the subpath of $v'C'u$ between the two vertices in $V(H_1'\cap H_2')$ properly contains the
subpath of $v'C'u$ between the two vertices in $V(H_1\cap H_2)$.

\begin{figure}[ht]
\begin{center}
        \includegraphics[scale=0.3]{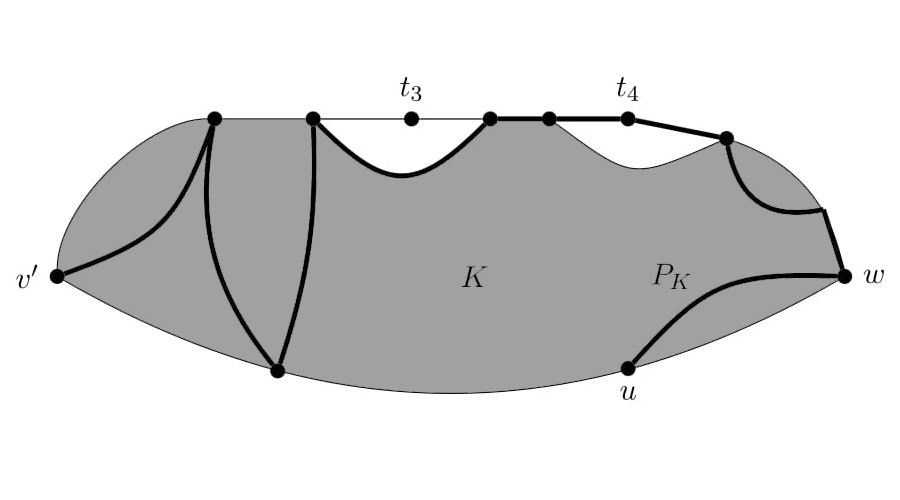}
           \caption{The graph $K$.}
             \label{graphK}
\end{center} 
    \end{figure}

Let $K$ be obtained from $H$ as follows:  For every maximal
2-separation $(H_1,H_2)$ in $H$ with $H_1$ containing $uCv'$,
contract $H_2$ to a single vertex (i.e., replace $H_2$ by a path of length 2
between the vertices of $V(H_1\cap H_2)$).  See Figure~\ref{graphK}.   Let $T$ denote the set of
the new vertices resulted from such contractions. Note that each vertex of $T$ has degree
2 in $K$. Let $D$ be the outer cycle of $K$. Then  $uDv'=uCv'$.  Let $w\in V(vCu)$ such that $wC'v'=wCv'$. Let 
$w'=w$ if $w\in V(K)$; and otherwise let $w'\in T$ be the vertex resulted from the contraction of such an $H_2$ containing $w$.
We may assume that

\begin{itemize}
\item [(3)] $K$ contains a $D$-Tutte path $P_K$ between $u$ and $v'$
   such that $w'\in V(P_K)$ and 
\[
\bridges[K]{P_K} =
  \begin{cases}
     (\order{K} -  6)/3 + \taubridges[G]{ue}+1, \quad & \text{if $|T|\ge 2$,}\\
     (\order{K} -  6)/3 + \taubridges[G]{ue}+ 2/3, \quad & \text{if $|T|\le 1$.}
  \end{cases}
\]
\end{itemize}
First, suppose $\taubridges[K]{uv'}<\taubridges[G]{ue}$. Note that
$|v'Du|\ge 3$ by (1), (2), and planarity. So we may choose $e'\in E(v'Du)$ with the following property:
$|v'De'|\ge 2$; if $|T|\ge 2$ then $e'$
is incident with $w'$; and if $|T|=1$ then  $e'$ is incident with the vertex in $T$ and  $e'$ is incident with $w'$ (whenever possible).  

Then $\taubridges[K]{v'e'}\le 1/3$ when $|T|\le 1$.   By induction hypothesis,
$K$ contains a $D$-Tutte path $P_K$ between $u$ and $v'$ such that $e'\in E(P_K)$ and
\[
\bridges[K]{P_K} \le (\order{K} - 6)/3
+\taubridges[K]{uv'}+\taubridges[K]{v'e'} + \taubridges[K]{e'u}\le
  \begin{cases}
    (\order{K} - 6)/3+\taubridges[G]{ue}+1, \quad & \text{if $|T|\ge 2$,}\\
    (\order{K} - 6)/3+\taubridges[G]{ue}+2/3, \quad & \text{if $|T|\le 1$.}
  \end{cases}
\]
By the choice of $e'$, we see that $w'\in V(P_K)$; so (3) holds. 

Thus, we may assume that $\taubridges[K]{uv'}\ge
\taubridges[G]{ue}$. Then $\taubridges[K]{uv'}=0$ (so $\taubridges[G]{ue}=0$), or $\taubridges[K]{uv'}=2/3$
and $uCv'$ is not good (so $\taubridges[G]{ue}=2/3$). Hence, 
$\taubridges[K]{uv'}= \taubridges[G]{ue}$  and  $|uCe|\ge 3$.

Suppose  $T\ne \emptyset$ and let $t\in T$ such that $t\in N_K(w')\cup \{w'\}$ whenever possible. Let $N_K(t)=\{x,y\}$ with
 $v', x,t,y,u$ occurring  on $D$ in clockwise order.
Let $K':=(K-t)+xy$ and $D':=yDx+xy$, such that $K'$ is a plane graph
and $D$ is its outer cycle. Then $(K',D')$ is a circuit graph.  Let $e':=xy$. Note that
$\taubridges[K']{uv'} =\taubridges[K]{uv'}= \taubridges[G]{ue}$.
By induction hypothesis, $K'$ contains a $D'$-Tutte path $P'$ between $u$ and $v'$ such that $e'\in E(P')$ and
\[
\bridges[K']{P'} \le (\order{K'} - 6)/3
+\taubridges[K']{uv'}+\taubridges[K']{v'e'} + \taubridges[K']{e'u}=
(\order{K} - 6)/3 -1/3+\taubridges[G]{ue}+\taubridges[K']{v'e'}+\taubridges[K']{e'u}.
\]
In particular,  $\bridges[K']{P'} \le(\order{K} - 6)/3+\taubridges[G]{ue}+1$. 
Note that $w'\in V(P')$ by the choice of $t$. Let  $P_K:=(P'-e')\cup xty$. Now, if $|T|\ge 2$
then $P_K$ is the desired
path for (3).
Hence we may assume $|T|=1$. If $u\ne y$ then $\taubridges[K']{e'u}\le 1/3$ (as $|T|=1$);  so
  $\bridges[K']{P'} \le(\order{K} -  6)/3+\taubridges[G]{ue}+2/3$ and $P_K$ is the desired path.
Now assume $u=y$. Then by (1),  $x\ne v'$ and, hence,
$\taubridges[K']{v'e'}\le 1/3$ (as $|T|=1$). So  $\bridges[K']{P'} \le (\order{K} -
  6)/3+\taubridges[G]{ue}+2/3$;  again $P_K$ is the desired path for (3).

Hence, we may assume $T=\emptyset$. 
If $\order{v'Du}\ge 4$ then we may choose $e'\in E(v'Du)$ such that
$\taubridges[K]{v'e'}=0$ and $\taubridges[K]{e'u}\le 2/3$; so by induction
hypothesis, $K$ has a $D$-Tutte path $P_K$ between $u$ and $v'$ such that $e'\in E(P_K)$ and $\bridges[K]{P_K}\le
(\order{K}-6)/3+\taubridges[K]{uv'}+2/3=
(\order{K}-6)/3+\taubridges[G]{ue}+2/3$, and (3) holds as $w'\in
V(P_K)$ (since $T=\emptyset$).  Thus, we may assume
$\order{v'Du}=3$ and let $x\in V(v'Du)\setminus \{u,v'\}$.
 Then $w'\in \{u,x\}$.

Since $|uCv'|\ge 3$ (as $\taubridges[G]{ue}\ne 1/3$),  we may choose $f\in E(uCv')$ such
that  $\taubridges[K]{fv'}=1/3$ and $\taubridges[K]{uf}\le 2/3$. Note that
$\taubridges[K]{v'u}=0$ (as $T=\emptyset$). So by induction hypothesis, $K$ has a $D$-Tutte path $P_K$
between $u$ and $v'$ such that $f\in E(P_K)$ and $\bridges[K]{P_K}\le (\order{K}-6)/3+1$. Note $w'\in V(P_K)$ as $T=\emptyset$. Thus, we may assume $\taubridges[G]{ue}=0$, for,
otherwise, $\bridges[K]{P_K}\le (\order{K}-6)/3+\taubridges[G]{ue}+2/3$, and (3) holds. 
So $uCe$ is good in $G$.

Since $T=\emptyset$,
$x$ has a neighbor in $K-v'Du$. Let $K':=(K-v'x)+uv'$ be the plane
graph whose outer cycle $D'$ consists of $uv'$ and the path in the outer walk of $K-v'x$ from $v'$ to $u$
and containing $x$.
Then $(K',D')$ is a circuit graph, since $uCe$ is good in $G$.
 Let $e':=xu$. Then $\taubridges[K']{uv'}=0$,
$\taubridges[K']{v'e'}=0$, and $\taubridges[K']{e'u}=2/3$.
By induction hypothesis, $K'$ has a
$D'$-Tutte path $P_K$ between $u$ and $v'$ such that $e'\in E(P_K)$ and
 \[
\bridges[K']{P_K} \le (\order{K'} - 6)/3+\taubridges[K']{uv'}+\taubridges[K']{v'e'} + \taubridges[K']{e'u}=
(\order{K} - 6)/3 +\taubridges[G]{ue}+2/3.
\]
Clearly, $P_K$ is also a $D$-Tutte path in $K$ and
$\bridges[K]{P_K}=\bridges[K']{P_K}$. Since $w'\in \{u,x\}$, $w'\in V(P_K)$; so $P_K$ is the desired path for (3). $\Box$

\medskip

We wish to extend $P_K$ along $eCv$ to the desired path $P$ in $G$. Thus we
need a useful description of the structure of the part of $G$ 
that lies between $H$ and $eCv$. See Figure~\ref{structure} for an illustration. 

Let ${\cal B}$ be the set of $(H \cup eCv)$-bridges of $G$. Then
$G = H \cup eCv \cup (\bigcup_{B\in {\cal B}}B)$. Since $H$ is a block
of $G-eCv$, $\order{B\cap
H}\le 1$ for all $B\in {\cal B}$. Note that each vertex $t\in T$ corresponds to a $(P_K-T)$-bridge of $H$ whose attachments on 
$P_K-T$ are the neighbors of $t$ in $P_K$, and that all other $(P_K-T)$-bridges of $H$ are also $P_K$-bridges of $K$.  

 For  $B_1,B_2\in {\cal B}$ with $|B_1 \cap H| = |B_2 \cap H|=1$, we write $B_1 \sim B_2$ if $V(B_1\cap
 H)=V(B_2\cap H)\subseteq V(P_K- T)$, or
 if there exists a $(P_K-T)$-bridge $B$ of $H$ such that $V(B_1\cap H)\cup
 V(B_2\cap H)\subseteq V(B-P_K)$.
Clearly, $\sim$ is an equivalence relation on ${\cal B}$.
Let ${\cal B}_i$, $i=1, \ldots, m$, be the equivalence classes of
${\cal B}$ with respect to $\sim$, such that $H\cap \left(\bigcup_{B\in {\cal B}_i}B\right)$ 
occur on $D$ from $v'$ to $w$  in order $i=1, \ldots, m$, with $v'\in
V(B)$ for all $B\in {\cal B}_1$ and $w\in V(B')$ for some $B'\in {\cal B}_m$.
Let $a_i,b_i\in V(eCv)$ such that
\begin{itemize}
\item [(a)] $a_i\in V(B)$ for some $B\in {\cal
  B}_i$ and $b_i\in V(B')$ for some $B'\in {\cal B}_i$ (possibly $B=B'$),
\item [(b)]  $v'', a_i,b_i,v$  occur on $eCv$ in order, and
\item [(c)] subject to (a) and (b),  $a_iCb_i$ is maximal.
\end{itemize}
Note that $v''=a_1$ and $v=b_m$.  Let $J_i$ denote the union of $a_iCb_i$, all
members of ${\cal B}_i$, those $(H\cup eCv)$-bridges of $G$ whose
attachments are all contained in $a_iCb_i$, and, if applicable, also the $(P_K-T)$-bridge of
$H$ containing $B\cap H$ for all $B\in {\cal B}_i$. Note that  $|J_i\cap (P_K-T)|\in \{1,2\}$, and if  $|J_i\cap
(P_K-T)|=2$ we let $t_i\in T$ be the vertex corresponding to  the $(P_K-T)$-bridge of
$H$ contained in $J_i$.  
For $1 \le i < m$,  let $L_i$ denote the union of
$b_iCa_{i+1}$ and those $(H\cup eCv)$-bridges of $G$ whose
attachments are all contained in $b_iCa_{i+1}$. Let ${\cal L}=\{L_i: 1\le i<m\}$.

 By Lemma~\ref{2cut}, we have $\order{J_1}=2$.  Thus, letting
 $P_1=J_1$, we have

\begin{itemize}
\item [(4)]  $\bridges[J_1]{P_1}=0=(|J_1|-1)/3 -1/3$.
\end{itemize}

For $1 < i < m$, let
\begin{itemize}
   \item ${\cal J}_1 =\{J_i: |J_i\cap (P_K-T)|=1  \mbox{ and }  a_i\ne b_i\}$,
   \item ${\cal J}_2 = \{J_i: |J_i\cap (P_K-T)|=2  \mbox{ and } t_i\notin
     V(P_K)\}$, and
\item ${\cal J}_3 =\{J_i: |J_i\cap (P_K-T)|=2 \mbox{ and } t_i\in
  V(P_K)\}$.
  
\end{itemize}

\begin{itemize}
\item [(5)] For $J_i \in {\cal J}_1$,
$J_i$ has a path $P_i$ between  $a_i$ and $b_i$  such that
  $P_i\cup (J_i\cap P_K)$ is an $a_iCb_i$-Tutte subgraph of $J_i$ and
\[
     \bridges[J_i]{P_i\cup (J_i\cap P_K)}\le
\begin{cases}
 (\order{J_i}-2)/3-1/3, \quad & \text{if $eCv$ is good,}\\
(\order{J_i}-2)/3, \quad & \text{otherwise.}
\end{cases}
\]
\end{itemize}
Let $V(J_i\cap P_K)=\{x\}$.
Consider the plane graph $J_i':=J_i+a_ix$ whose outer cycle $C_i$
consists of $a_iCb_i$,  the edge $e_i:=xa_i$,
and the path in the outer walk of $J_i$ between $b_i$ and $x$ not
containing $a_i$.
Then $(J_i', C_i)$ is a circuit graph. Note that
$\taubridges[J_i']{xe_i}=2/3$
and $\taubridges[J_i']{b_ix}=0$.

Hence, by induction hypothesis, $J_i'$ has a $C_i$-Tutte path $P_i'$ between $x$ and $b_i$ such that
$e_i\in E(P_i')$ and
$\bridges[J_i']{P_i'}\le
(\order{J_i}-6)/3+\taubridges[J_i']{e_ib_i}+2/3$.
Note that $\taubridges[J_i']{e_ib_i}\le 2/3$ and if $eCv$ is good in $G$ then
$\taubridges[J_i']{e_ib_i}\le 1/3$ (as $a_i\ne b_i$).
So $P_i:=P_i'-x$ gives the desired path for (5). $\Box$

\begin{itemize}
\item [(6)] For $J_i \in {\cal J}_2$, $J_i$ has a path $P_i$ between
  $a_i$ and $b_i$  such that
  $P_i\cup (J_i\cap (P_K-T))$ is an $a_iCb_i$-Tutte subgraph of $J_i'$ and
\[
\bridges[J_i]{P_i\cup (J_i\cap (P_K-T))}\le
  \begin{cases}
  (\order{J_i}-4)/3+1/3, \quad & \text{if $eCv$
                           is good,}\\
  (\order{J_i}-4)/3+1, \quad & \text{otherwise.}
  \end{cases}
\]
\end{itemize}
Let $V(J_i\cap (P_K-T))=\{x,y\}$ such that $v',y,x,w$ occur on $D$ in clockwise order. Let $J_i'$ be the block
of $J_i-\{x,y\}$ containing $a_iCb_i$, and let $C_i$ be the outer
cycle of $J_i'$. Note that $a_iC_ib_i=a_iCb_i$.

By planarity there exists a vertex $z\in V(b_iC_ia_i)\setminus \{a_i,b_i\}$ such that $b_iC_iz-z$
contains no neighbor of $y$ and $zC_ia_i-z$ contains no
neighbor of $x$.

First, suppose  $b_iC_ia_i$ is good in $J_i'$.  Let $e_i\in E(b_iC_ia_i)$ incident with
$z$ such that $\taubridges[J_i']{e_ia_i}\le 1/3$ or
$\taubridges[J_i']{b_ie_i}\le 1/3$.
Now by induction hypothesis, $J_i'$ has a $C_i$-Tutte path $P_i$ between
$a_i$ and $b_i$ such that $e_i\in E(P_i)$ and  $\bridges[J_i']{P_i}\le
(|J_i'|-6)/3+ \taubridges[J_i']{a_ib_i}+1$.
If $J_i'\ne J_i-\{x,y\}$ then $|J_i'|\le |J_i|-3$ and, since
$b_iC_ia_i$ is good in $J_i'$, 
\begin{align*}
  \bridges[J_i]{P_i\cup (J_i\cap (P_K-T))}
      & =\bridges[J_i']{P_i}+1 \\
      & \le   (|J_i|-3-6)/3+\taubridges[J_i']{a_ib_i}+1+1\\
     &=(|J_i|-4)/3+\taubridges[J_i']{a_ib_i}+1/3.
\end{align*}
If $J_i'=J_i-\{x,y\}$ then $|J_i'|=|J_i|-2$ and, since $b_iC_ia_i$ is
good in $J_i'$, 
\begin{align*}  \bridges[J_i]{P_i\cup (J_i\cap (P_K-T))} & = \bridges[J_i']{P_i}\\
           &\le (|J_i|-2-6)/3+\taubridges[J_i']{a_ib_i}+1\\
          & =(|J_i|-4)/3+\taubridges[J_i']{a_ib_i}-1/3.
\end{align*}
 Since $\taubridges[J_i']{a_ib_i}=0$ (if $eCv$ is good)  and
$\taubridges[J_i']{a_ib_i}\le 2/3$ (if $eCv$ is not good), we see that
$P_i$ gives the desired path for (6).

Now assume that $b_iC_ia_i$ is not good in $J_i'$. Then let
$(M_1,M_2)$ be a 2-separation in $J_i'$ such that $a_iCb_i\subseteq
M_1$, $|M_2|\ge 3$, $z\in M_2$ (whenever possible), and,  subject to these conditions, $M_2$ is
minimal. Let $V(M_1\cap
M_2)=\{z_1,z_2\}$ such that $a_i,b_i,z_1,z_2$ occur on $C_i$ in
clockwise order. Let  $M_1':=M_1+z_1z_2$ be the plane graph with outer cycle  
$D_1:=z_2C_iz_1+z_1z_2$, and let $M_2':=M_2+z_2z_1$ be the plane graph with outer cycle $D_2:=z_1C_iz_2+z_2z_1$. Then
$(M_1',D_1)$ and $(M_2',D_2)$ are circuit graphs. Let $f:=z_1z_2$.

By induction hypothesis, $M_1'$ has a $D_1$-Tutte path $R_1$ between
$a_i$ and $b_i$ such that $f\in E(R_1)$ and 
$$\bridges[M_1']{R_1}\le (|M_1'|-6)/3+ \taubridges[M_1']{a_ib_i}+4/3=   (|M_1'|-6)/3+ \taubridges[J_i']{a_ib_i}+4/3 .$$ 
Also by induction
hypothesis and choosing an edge $f'\in E(z_1C_iz_2)$ so that
$\taubridges[M_2']{z_1f'}\le 1/3$ or $\taubridges[M_2']{f'z_2}\le 1/3$,
we see that  $M_2'$ has a $D_2$-Tutte path $R_2$ between $z_1$ and $z_2$
such that $f'\in E(R_2)$ and $$\bridges[M_2']{R_2}\le
(|M_2'|-6)/3+1,$$ as $\taubridges[M_2']{z_2z_1}=0$ (since
$(M_2',D_2)$ is a circuit graph).   Let $P_i=(R_1-f)\cup R_2$, which is a path
in $J_i'$ between $a_i$ and $b_i$ such that $P_i\cup (J_i\cap
(P_K-T))$ is an $a_iCb_i$-Tutte subgraph of $J_i'$. Note that  $z\in V(P_i)$ by the
choice of $(M_1,M_2)$ (that $z_2\in V(M_2)$ whenever possible and $M_2$ is minimal).

If $J_i'=J_i-\{x,y\}$ then $|J_i'|=|J_i|-2$ and 
\begin{align*}
   \bridges[J_i]{P_i\cup (J_i\cap (P_K-T))} & =\bridges[M_1']{R_1}+ \bridges[M_2']{R_2} \\
                               & \le  (|M_1'|-6)/3+
                               \taubridges[J_i']{a_ib_i}+4/3+
                               (|M_2'|-6)/3+1\\
                               & =(|J_i'|-6)/3+\taubridges[J_i']{a_ib_i}+1\\
                               &=
                               (|J_i|-4)/3+\taubridges[J_i']{a_ib_i}-1/3.
\end{align*}
If $J_i'\ne J_i-\{x,y\}$ then $|J_i'|\le |J_i|-3$ and 
\begin{align*}
   \bridges[J_i]{P_i\cup (J_i\cap (P_K-T))} & =\bridges[M_1']{R_1}+ \bridges[M_2']{R_2} +1\\
                               & \le  (|M_1'|-6)/3+
                               \taubridges[J_i']{a_ib_i}+4/3+
                               (|M_2'|-6)/3+2\\
                               & =(|J_i'|-6)/3+\taubridges[J_i']{a_ib_i}+2\\
                               &\le
                               (|J_i|-4)/3+\taubridges[J_i']{a_ib_i}+1/3.
\end{align*}

Therefore, since $\taubridges[J_i']{a_ib_i}=0$ if $eCv$ is good and
$\taubridges[J_i']{a_ib_i}\le 2/3$ otherwise, we see that 
$P_i$ is the desired path for (6). $\Box$

\begin{itemize}
\item [(7)] For $J_i \in {\cal J}_3$, 
$J_i$ has disjoint paths $P_i,P_i'$   such that $P_i$ is between $a_i$
and $b_i$, $P_i'$ is between the two vertices in $V(J_i\cap (P_K-T))$,
  $P_i\cup P_i'$ is an $a_iCb_i$-Tutte subgraph of $J_i$, and
\[
   \bridges[J_i]{P_i\cup P_i'}\le
 \begin{cases}
     (\order{J_i}-4)/3-1/3,  \quad & \text{if $eCv$ is good,}\\
     (\order{J_i}-4)/3,  \quad & \text{otherwise.}
  \end{cases}
\]
\end{itemize}
Let $V(J_i\cap (P_K-T))=\{x,y\}$ and assume that $v',y,x,w$ occur on $D$
in clockwise order. Consider the plane graph $J_i':=J_i+b_ix$  with $a_iCb_i,
e_i:=b_ix, y$ occur on its outer cycle $C_i$ in clockwise order. (Note
that $xy\notin E(J_i)$ by the definition of $J_i$.) Then, since $(G,C)$ is a circuit
graph, $(J_i',C_i)$ is a circuit graph and  $\taubridges[J_i']{e_iy}=\taubridges[J_i']{ya_i}=0$.

Thus, by induction hypothesis, $J_i'$ contains a $C_i$-Tutte path
$R_i$ between $a_i$ and $y$ such that $e_i\in E(R_i)$ and
\[
 \bridges[J_i']{R_i}\le (|J_i'|-6)/3+\taubridges[J_i']{a_ie_i}=(|J_i|-4)/3+\taubridges[J_i']{a_ie_i}
-2/3.
\]
Thus, $R_i-e_i$ is an $a_iCb_i$-Tutte subgraph of $J_i$ such that 
\[
  \bridges[J_i]{R_i-e_i}=\bridges[J_i']{R_i}\le
  (|J_i|-4)/3+\taubridges[J_i']{a_ie_i} -2/3.
\]
Note that $\taubridges[J_i']{a_ie_i}\le 1/3$ (if $eCv$ is good) and
$\taubridges[J_i']{a_ie_i}\le 2/3$ (if $eCv$ is not good). 
 So $R_i-e_i$ gives the desired paths for (7). $\Box$

\medskip

Next, we consider $J_m$. Note that if $|J_m\cap (P_K-T)|=2$ then $t_m=w'\in V(P_K)$, and if $|J_m\cap (P_K-T)|=1$ then $w\in V(J_m\cap (P_K-T))$. 
Let $c=2$ if  $|J_m\cap (P_K-T)|=1$, and  $c=4$ if  $|J_m\cap (P_K-T)|=2$. Note that $c$ is the number of vertices double counted by $|J_m|$ 
 and $|K\cup (J_1\cup L_1) \cup \ldots \cup (J_{m-1}\cup L_{m-1})|$.

\begin{itemize}
\item [(8)] $J_m$ has disjoint paths $P_m,P_m'$ with  $P_m$ between $a_m$  and $b_m$ and $P_m'$ between the vertices in 
$V(J_m\cap (P_K-T))$, such that that  $P_m\cup P_m'$ is an $(a_mCu\cap J_m)$-Tutte subgraph of $J_m$ and  
  \[
      \bridges[J_m]{P_m\cup P_m'}\le
\begin{cases}
      (|J_m|-c)/3+\taubridges[G]{vu} -1/3, \quad & \text{if $a_m\ne
        b_m$ and $eCv$ is good,}\\
      (|J_m|-c)/3+\taubridges[G]{vu}, \quad & \text{otherwise.}
\end{cases}
\]
\end{itemize}
First, suppose  $a_m=b_m$ and $|J_m\cap (P_K-T)|=1$. Then $c=2$, and let $P_m=a_m$ and $P_m'=w$. Now
$\bridges[J_m]{P_m\cup P_m'}\le 1$, with equality only if
$|J_m|\ge 3$, in which case, $vCu$ is not good in $G$ and
$\taubridges[G]{vu}=2/3$. So  $\bridges[J_m]{P_m\cup P_m'}\le
(|J_m|-c)/3+\taubridges[G]{vu}$ as $c=2$.

Now assume $a_m\ne b_m$ or  $|J_m\cap (P_K-T)|=2$. If  $|J_m\cap (P_K-T)|=2$ then let $V(J_m\cap (P_K-T))=\{x,y\}$ such that $v,y,w,x$ occur on $D$ 
in clockwise order, and if  $|J_m\cap (P_K-T)|=1$ then let $y=x=w$.
Consider the plane graph $J_m^*:=J_m+ya_m$ with outer
cycle $C_m$ containing $a_mCx$ and $ya_m$. Then $(J_m^*,C_m)$ is a
circuit graph.  Let $e_m:=ya_m$. 
Note that $\taubridges[J_m^*]{b_mx}\le \taubridges[G]{vu}$,  
$\taubridges[J_m^*]{ye_m}=2/3=(4-c)/3$, and if $x\ne y$ then $\taubridges[J_m^*]{xy}=0=(4-c)/3$.

By induction hypothesis, $J_m^*$ has a $C_m$-Tutte
path $P_m^*$  between $b_m$ and $x$  such that $e_m\in E(P_m^*)$ and
\begin{align*}
\bridges[J_m^*]{P_m^*} &\le
(\order{J_m^*}-6)/3+\taubridges[G]{vu}+(4-c)/3+\taubridges[J_m^*]{e_mb_m}\\
& = (\order{J_m}-c)/3-2/3 +\taubridges[G]{vu}+\taubridges[J_m^*]{e_mb_m}.
\end{align*}
Note that $\taubridges[J_m^*]{e_mb_m} \le 1/3$ (when $eCv$ is good in $G$) and
$\taubridges[J_m^*]{e_mb_m}\le 2/3$ (when $eCv$ is not good in $G$). Hence,
$P_m^*-ya_m$ gives the desired paths for (8). $\Box$

\medskip

Next, we consider the family ${\cal L}:=\{ L_i: 1\le i<m\}$, see its
definition in front of (4).

\begin{itemize}
\item [(9)] For each $L_i\in {\cal L}$, $L_i$ contains a $b_iCa_{i+1}$-Tutte path $Q_i$ from $b_i$
  to $a_{i+1}$ such that $\bridges[L_i]{Q_i}\le \max\{0, (|L_i|-2)/3-1/3\}$.
\end{itemize}
If $|b_iCa_{i+1}|\le 2$ then let $Q_i:=b_iCa_{i+1}$; we see that
$\bridges[L_i]{Q_i}=0$ as $(G,C)$ is a circuit graph.
So assume $|b_iCa_{i+1}|\ge 3$. Then consider the plane graph
$L_i':=L_i+a_{i+1}b_i$ with outer cycle $D_i:=b_iCa_{i+1}+a_{i+1}b_i$. Note
that $(L_i',D_i)$ is a circuit graph. Choose an edge
$e_i\in E(b_iCa_{i+1})$ so that
$\taubridges[L_i']{b_ie_i}=1/3$. Note that
  $\taubridges[L_i']{a_{i+1}b_i}=0$ and
  $\taubridges[L_i']{e_ia_{i+1}}\le 2/3$.  So
by induction hypothesis, $L_i'$ contains a $D_i$-Tutte path $Q_i$ between $b_i$ and $a_{i+1}$ such that
$e_i\in E(Q_i)$ and
$\bridges[L_i']{Q_i}\le
(|L_i'|-6)/3+1 =(|L_i|-2)/3-1/3$. $\Box$

\medskip

Let $P$ be the union of $P_K-T$, $P_i\cup P_i'$
for $i=1, \ldots, m$ (where we let $P_i'=J_i\cap (P_K-T)$ when $|J_i\cap (P_K-T)|=1$), and $Q_i$
for $i=1, \ldots, m-1$. Clearly,
$P$ is a  path between $u$ and $v$ and $e\in E(P)$.

It is easy to see that if $B$
is a $P$-bridge of $G$ then $B$ is a $P_K$-bridge of $K$, or a
$(P_i\cup P_i')$-bridge of $J_i$ for some $i$ with $1\le i \le m$,
or a $Q_i$-bridge of $L_i$ for some $i$ with $1\le i<m$, or $|B|=2$ and $|B\cap eCv|=|B\cap (P_K-T)|=1$.  Thus,
$P$ is a $C$-Tutte path in $G$ between $u$ and $v$ and containing $e$.
Note that
\begin{itemize}
   \item [] ${\cal J}_2=\{J_i: |J_i\cap (P_K-T)|=2 \mbox{ and }
     P_i'=J_i\cap (P_K-T)\}$ and
   \item [] ${\cal J}_3=\{J_i: |J_i\cap (P_K-T)|=2 \mbox{ and } P_i'\ne J_i\cap (P_K-T)\}$.
\end{itemize}
If we extend $P_K-T$ from $v'$ to $v$ through
$J_1, L_1, J_2, L_2, \ldots, J_{m-1}, L_{m-1}, J_m$ in order, we see that
\begin{itemize}
\item $J_1$ and $K$ double count 1 vertex (namely, $v'$);
\item $J_m$ and $K\cup (J_1\cup L_1) \cup \ldots \cup (J_{m-1}\cup L_{m-1})$ double count $c$ vertices, 
where $c=2$  when $|J_m\cap (P_K-T)|=1$  (namely, $a_m$ and $w$) and $c=4$ when $|J_m\cap (P_K-T)|=2$ (namely, $a_m, t_m$ and vertices in $V(J_m\cap P_K)$); 
 \item  $L_i$ and $K\cup (J_1\cup L_1) \cup \ldots \cup (J_{i-1}\cup
   L_{i-1})\cup J_i$ double count 1 vertex, namely  $a_i$;
 \item for $1<i<m$, if $J_i\in {\cal J}_1$ then $J_i$ and $K\cup (J_1\cup L_1) \cup
   \ldots \cup (J_{i-1}\cup L_{i-1})$ double count 2 vertices: $a_i$
   and the vertex in $V(J_i\cap P_K)$;
 \item for $1<i<m$,  if $J_i\in {\cal J}_2\cup {\cal J}_3$ then  $J_i$ and $K\cup
   (J_1\cup L_1) \cup \ldots \cup (J_{i-1}\cup L_{i-1})$ double count
   4 vertices: $a_i, t_i$, and the vertices in $V(J_i\cap P_K)$.

\end{itemize}
Note that for each $J_i \in {\cal J}_2$, the $P_K$-bridge of $K$
corresponding to the vertex $t_i\in T$  does
not contribute to  $\bridges[G]{P}$.
Thus,

\begin{align*}
\bridges[G]{P} &=\bridges[K]{P_K} + \bridges[J_1]{P_1}+\sum_{J_i \in {\cal J}_1}
                 \bridges[J_i]{P_i \cup P_i'} +\sum_{J_i\in {\cal
                 J}_2}(\bridges[J_i]{P_i\cup P_i'} - 1) +\\
                 & \quad \sum_{J_i \in {\cal
                   J}_3}\bridges[J_i]{P_i\cup
                   P_i'}+\bridges[J_m]{P_m\cup P_m'}+
                   \sum_{i=1}^{m-1}\bridges[L_i]{Q_i}.
\end{align*}

We may assume 
\begin{itemize}
\item [(10)] $eCv$ is good in $G$.
\end{itemize}
For, suppose $eCv$ is not good in $G$. Then $\taubridges[G]{ev}=2/3$. Hence, by
(4)--(9) and the above observation on double counting vertices, we have 
\begin{align*}
\bridges[G]{P} &\le \bridges[K]{P_K}+((|J_1|-1)/3-1/3) +\sum_{J_i\in
                 {\cal J}_1}(|J_i|-2)/3 +\sum_{J_i \in {\cal J}_2\cup
                 {\cal J}_3}(|J_i|-4)/3 +\\ 
             &\quad (|J_m|-c)/3+\taubridges[G]{vu}
            +\sum_{L_i\in {\cal L}}\max\{0, (|L_i|-2)/3-1/3\}\\
&\le (n-6)/3 -1/3 +\taubridges[G]{vu}+ \left(\bridges[K]{P_K}-(|K|-6)/3\right)\\
&\le  (n-6)/3 -1/3+\taubridges[G]{vu}+\taubridges[G]{ue}+1 \quad
  \mbox{ (by (3))} \\
&= (n-6)/3 +\taubridges[G]{vu}+\taubridges[G]{ue} +\taubridges[G]{ev}.
\end{align*}
So $P$ is the desired path. $\Box$

\medskip

By (10), $|L_i|\le 2$ for all $L_i\in{\cal L}$. By (4)--(10) and the above observation on double counting vertices, we have
\begin{align*}
 \bridges[G]{P} 
&\le \bridges[K]{P_K}+((|J_1|-1)/3-1/3) +\sum_{J_i\in
                 {\cal J}_1}((|J_i|-2)/3-1/3) + \\
      & \quad \sum_{J_i \in {\cal
                 J}_2}((|J_i|-4)/3+1/3-1)+ \sum_{J_i\in {\cal J}_3}((|J_i|-4)/3-1/3)
                 + (|J_m|-c)/3+\taubridges[G]{vu} \\
&\le (n-6)/3+\taubridges[G]{vu}-|{\cal J}_1|/3 -(|T|+1)/3 
  + \left(\bridges[K]{P_K}-(|K|-6)/3\right), 
\end{align*}
since $|T| = |{\cal J}_2 \cup {\cal J}_3|$. 
We may assume that 

\begin{itemize}
\item [(11)] $|T|\le 1$, ${\cal J}_i=\emptyset$ for $i=1,2,3$, and $|eCv|\ge 3$.  
\end{itemize}
First, we may assume $|T|\le 1$. For, suppose $|T|\ge 2$. Then, since   $\bridges[K]{P_K}\le
(|K|-6)/3+\taubridges[G]{ue}+1$ (by (3)),  
\[
\bridges[G]{P}\le(n-6)/3+\taubridges[G]{vu}+\taubridges[G]{ue}\le 
(n-6)/3+\taubridges[G]{vu}+\taubridges[G]{ue}+\taubridges[G]{ev},
\]
and $P$ gives the desired path. 

Therefore, $\bridges[K]{P_K}\le (|K|-6)/3+\taubridges[G]{ue}+2/3$ by (3). 
We may also assume ${\cal J}_i=\emptyset$ for $i=1,2,3$. For, otherwise, $|{\cal J}_1|\ge 1$ or $|T|\ge 1$; so  $ \bridges[G]{P} \le
(n-6)/3+\taubridges[G]{vu}-2/3+\taubridges[G]{ue}+2/3\le
(n-6)/3+\taubridges[G]{vu}+\taubridges[G]{ue}+\taubridges[G]{ev},$
and $P$ is the desired path.

If $|eCv|=1$ then  $\taubridges[G]{ev}=2/3$ and
\[
\bridges[G]{P} \le (n-6)/3+\taubridges[G]{vu}+\taubridges[G]{ue}+2/3=(n-6)/3+\taubridges[G]{vu}+\taubridges[G]{ue}+\taubridges[G]{ev};
\]
$P$ gives the desired path.   If $|eCv|=2$ then $\taubridges[G]{ev}=1/3$ and
$$ \bridges[G]{P} \le
(n-6)/3+\taubridges[G]{vu}-1/3+\taubridges[G]{ue}+2/3=
(n-6)/3+\taubridges[G]{vu}+\taubridges[G]{ue}+\taubridges[G]{ev};$$ 
so $P$ is the desired path.  Therefore, we may assume $|eCv|\ge 3$. $\Box$

\medskip

Suppose  $a_m\ne b_m$. Then by (3), (4),  (8), (10), and (11), we have 
\begin{align*} 
\bridges[G]{P} & \le \bridges[K]{P_K}+((|J_1|-1)/3-1/3)  +
((|J_m|-c)/3+\taubridges[G]{vu}-1/3)\\
   & \le  ((|K|-6)/3+\taubridges[G]{ue}+2/3)+((|J_1|-1)/3-1/3)  +
((|J_m|-c)/3+\taubridges[G]{vu}-1/3)\\
&\le (n-6)/3+\taubridges[G]{vu}+\taubridges[G]{ue}\\
&= (n-6)/3+\taubridges[G]{vu}+\taubridges[G]{ue}+\taubridges[G]{ev}.
\end{align*}
So $P$ is the desired path. 

Thus, we may  assume $a_m=b_m$. If $|J_m|=2$ then
\begin{align*}
\bridges[G]{P} & = \bridges[K]{P_K} \\
& \le (|K|-6)/3+\taubridges[G]{ue}+2/3 \quad(\mbox{by (3) and (11)})\\
& \le (n-6)/3+\taubridges[G]{ue}-1/3 \quad (\mbox{since $|eCv|\ge 3$})\\
&\le (n-6)/3+\taubridges[G]{vu}+\taubridges[G]{ue}+\taubridges[G]{ev}, 
\end{align*}
and $P$ is the desired path. 
So assume $|J_m|\ge 3$. Then $\taubridges[G]{vu}=2/3$. Since $|eCv|\ge 3$,
\begin{align*}
   \bridges[G]{P} & =\bridges[K]{P_K}+1\\
  &\le (|K|-6)/3+\taubridges[G]{ue}+2/3+1 \quad(\mbox{by (3) and (11)})\\
  &\le (n-6)/3+\taubridges[G]{ue}+2/3 \quad (\mbox{since $|eCv|\ge 3$})\\
   & \le (n-6)/3+\taubridges[G]{vu}+\taubridges[G]{ue}+\taubridges[G]{ev}.
\end{align*}
Again, $P$ gives the desired path. 
\qed

\section{Proof of Theorem~\ref{main}}

Note that we need $n\ge 6$.  However, when $n\le 5$ the graph $G$ is
Hamiltonian.

First, suppose $G$ is 4-connected. Fix a planar drawing of $G$ and let
$T$ be the outer cycle of $G$.
Let $uv, e\in E(T)$ be distinct. By
applying Theorem~\ref{technical}, $G$ has a $T$-Tutte path $P$ between
$u$ and $v$ such that $e\in E(P)$. Since $G$ is 4-connected,
$\bridges[G]{P}=0$; so $P$ is in
fact a Hamilton path. Hence,  $P+uv$ is a Hamilton cycle in $G$ and,
thus, has length $n$, which is at least $(2n+6)/3$ (as $n\ge 6$). 

Hence, we may assume that $G$ is not 4-connected. Then, since $G$ is
essentially 4-connected, there exists $x\in V(G)$ such that $x$ has
degree 3 in $G$. So let $N_G(x)=\{u,v,w\}$ and let $H:=G-x$ and assume
that $H$ is a plane graph with $u,v,w$ on the outer cycle $C$ of $H$ in
counter clockwise order. Note that $(H,C)$ is a circuit graph.

Suppose two of $\order{uCw}, \order{wCv}, \order{vCu}$ is at least
3. Without loss of generality, we may assume
that $\order{uCw}\ge 3$ and $\order{wCv}\ge 3$. Let $e\in E(uCw)$ be incident with $w$. Then
$\taubridges[H]{vu}=0$, $\taubridges[H]{ue}\le 1/3$, and
$\taubridges[H]{ev}=0$. Hence, by Theorem~\ref{technical}, $H$
has a $C$-Tutte path between $u$ and $v$ such that $e\in E(P)$ and
$\bridges[H]{P}\le (n-7)/3+1/3=(n-6)/3$. Thus, $Q:=P\cup uxv$ is a Tutte cycle
in $G$ such that $\bridges[G]{Q}\le (n-6)/3$.
Since $G$ is essentially 4-connected, every $Q$-bridge is a
$K_{1,3}$. Hence, $\order{Q}\ge n-(n-6)/3=(2n+6)/3$.

So we may assume that $\order{wCv}=\order{vCu}=2$.  Consider the
plane graph $K:=H-wv$ whose outer cycle $D$ contains $vCw$. Since $G$
is essentially 4-connected, $K$ is 2-connected; so 
$(K,D)$ is a circuit graph. We can
choose $e\in E(wDv)$ incident with $w$. Now $\taubridges[K]{vu}=0$,
$\taubridges[K]{ue}\le 1/3$, and  $\taubridges[K]{ev}\le 1/3$.

If   $\taubridges[K]{ue}=0$ or $\taubridges[K]{ev}=0$,  then by Theorem~\ref{technical}, $K$
has a $D$-Tutte path between $u$ and $v$ such that $e\in E(P)$ and
$\bridges[K]{P}\le (n-7)/3+1/3=(n-6)/3$. Thus, $Q:=P\cup uxv$ is a
cycle in $G$ with $\order{Q}\ge n-(n-6)/3=(2n+6)/3$.

So assume $\taubridges[K]{ue}=\taubridges[K]{ev}=1/3$ and, hence,
$\order{wDv}=3$ and $\order{uDw}=2$. Since $n\ge 6$ and $G$ is
essentially 4-connected, one of
$\{v,w\}$ has a  neighbor inside $D$, say $w$ by
symmetry.  Now consider the plane graph $J:=H-uw$, which is
2-connected as $G$ is essentially 4-connected. Let $F$ denote the
outer cycle of $J$, which contains $\{u,v,w\}$. Clearly, $(J,F)$ is a circuit
graph. Choose $f\in E(uFw)$ incident with $w$. Then
$\taubridges[J]{ue}\le 1/3$, $\taubridges[J]{ev}=0$, and
$\taubridges[J]{vu}=0$. Hence,  by Theorem~\ref{technical}, $J$
has an $F$-Tutte path between $u$ and $v$ such that $f\in E(P)$ and
$\bridges[J]{P}\le (n-7)/3+1/3=(n-6)/3$. Thus, $Q:=P\cup uxv$ is a
cycle in $G$ with $\order{Q}\ge n-(n-6)/3=(2n+6)/3$. \qed

\newpage

\end{document}